\newcommand{\be}{\begin{equation}}
\newcommand{\ee}{\end{equation}}
\newtheorem{proposition}{Proposition}[section]
\newtheorem{lemma}[proposition]{Lemma}
\newtheorem{definition}[proposition]{Definition}
\newtheorem{theorem}[proposition]{Theorem}
\newtheorem{fact}[proposition]{Fact}
\title{Compressed sensing and optimal denoising of monotone signals}%, sparsely varying signals}
\author{Eftychios A. Pnevmatikakis}
\affil{Center for Computational Biology, Flatiron Institute, Simons Foundation, New York, NY 10010}
\date{}
\begin{document}
%\ninept
%
\maketitle
\begin{abstract}
We consider the problems of compressed sensing and optimal denoising for signals $\bm{x_0}\in\mathbb{R}^N$ that are monotone, i.e., $\bm{x_0}(i+1) \geq \bm{x_0}(i)$, and sparsely varying,  i.e., $\bm{x_0}(i+1) > \bm{x_0}(i)$ only for a small number $k$ of indices $i$. We approach the compressed sensing problem by minimizing the total variation norm restricted to the class of monotone signals subject to equality constraints obtained from a number of measurements $A\bm{x_0}$. For random Gaussian sensing matrices $A\in\mathbb{R}^{m\times N}$ we derive a closed form expression for the number of measurements $m$ required for successful reconstruction with high probability. We show that the probability undergoes a phase transition as $m$ varies, and depends not only on the number of change points, but also on their location. For denoising we regularize with the same norm and derive a formula for the optimal regularizer weight that depends only mildly on $\bm{x_0}$. We obtain our results using the statistical dimension tool. %Finally, we discuss extensions of this approach for compressed sensing and denoising of (non-monotonically) sparsely varying signals using the total variation norm.
\end{abstract}
%
%\begin{keywords}
%Compressed sensing, optimal denoising, phase transitions, statistical dimension
%\end{keywords}
%
\section{Introduction}
We consider $N$-dimensional signals $\bm{x_0}$ that are sparse in a certain basis. We are interested in the following problems
\begin{flalign}
	&&\min_{\bm{x}}f(\bm{x}), \text{ subject to } A\bm{x} = A\bm{x_0}, && \label{PCS} \tag{CS} \\
	\text{and} &&\min_{\bm{x}}\dfrac{1}{2}\|\bm{y}-\bm{x}\|^2 + \lambda f(\bm{x}), &&\label{PDN} \tag{DN}
\end{flalign}
%\be
%	\tag{CS}
%	\min_{\bm{x}}f(\bm{x}), \text{ subject to } A\bm{x} = A\bm{x_0},
%	\label{PCS}
%\ee
%and
%\be
%	\tag{DN}
%	\min_{\bm{x}}\dfrac{1}{2}\|\bm{y}-\bm{x}\|^2 + \lambda f(\bm{x}),
%	\label{PDN}
%\ee
with $\bm{y} = \bm{x_0} + \bm{\varepsilon}$ and $\bm{\varepsilon} \sim \mathcal{N}(0,\sigma^2I_N)$. Here $f:\mathbb{R}^N\mapsto \mathbb{R}\cup\{\infty\}$ is a convex function that characterizes the structure of $\bm{x_0}$. For the compressed sensing problem \eqref{PCS} we are interested in deriving the minimum number of measurements $m$ such that the solution of \eqref{PCS} coincides with $\bm{x_0}$ with high probability for standard normal i.i.d. sensing matrices $A\in\mathbb{R}^{m\times N}$. For the denoising problem \eqref{PDN} we are interested in calculating the minimax risk, i.e., the optimal value of  \eqref{PDN} minimized over $\lambda$ for the worst case of noise power $\sigma^2$. The two quantities are closely related due to some recent results reviewed briefly next.

\section{Basic tools}

\begin{definition}[Descent cones]
	The descent cone of a convex function $f:\mathbb{R}^N\mapsto \mathbb{R}$ at a point $\bm{x} \in \mathbb{R}^N$ is defined as the set of all non-increasing directions, i.e.,
	\[ \mathcal{D}(f,\bm{x}) = \bigcup_{\tau>0}\{\bm{y}\in\mathbb{R}^N: f(\bm{x}+\tau\bm{y}) \leq f(\bm{x})\}. \]
	\label{DC-def}
\end{definition}
%\vskip-4cm
\begin{definition}[Statistical dimension \citep{ALMT14}]
	The statistical dimension (SD) of a convex closed cone $\mathcal{C} \in \mathbb{R}^N$ is defined as 
		\[ \delta(\mathcal{C}) = \mathbb{E}_{\bm{g} \sim \mathcal{N}(\bm{0},I_N)}\|\Pi_{\mathcal{C}}(\bm{g})\|^2, \]
	where $\bm{g}$ is a standard Gaussian vector, and $\Pi_{\mathcal{C}}$ is the projection onto $\mathcal{C}$.
\end{definition}
\noindent In a groundbreaking work, \cite{ALMT14} shows that the SD of the descent cone at the true point $\bm{x_0}$, coincides with the phase transition curve (PTC) of the \ref{PCS} problem.
\begin{theorem}[Phase transitions \citep{ALMT14}]
	For an i.i.d. standard random Gaussian matrix $A \in \mathbb{R}^{m\times N}$ the convex problem \eqref{PCS} succeeds with probability at least $1 - \exp(-t^2/4)$ if 
	\[ m \geq \delta(\mathcal{D}(f,\bm{x_0})) + t\sqrt{N}, \]
	and fails with probability at least $1 - \exp(-t^2/4)$ if
	\[ m \leq \delta(\mathcal{D}(f,\bm{x_0})) - t\sqrt{N}. \]
	\label{PTC-thm}
\end{theorem}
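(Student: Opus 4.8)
The plan is to turn the algebraic success condition for \eqref{PCS} into a statement about a uniformly random subspace, and then control that statement with Gaussian concentration. First I would record the standard convex optimality characterization: $\bm{x_0}$ is the unique minimizer in \eqref{PCS} exactly when $\mathrm{null}(A)\cap\mathcal{D}(f,\bm{x_0})=\{\bm 0\}$, since every feasible point is $\bm{x_0}+\bm v$ with $A\bm v=\bm 0$ and fails to improve on $\bm{x_0}$ precisely when $\bm v$ avoids the descent cone (with the usual measure-zero caveat about whether one uses $\mathcal{D}$ or its closure). Because $A$ has i.i.d.\ standard Gaussian entries, for $m\le N$ its null space is almost surely a uniformly distributed $(N-m)$-dimensional subspace $L$, so the theorem reduces to: for the fixed convex cone $\mathcal{C}=\mathcal{D}(f,\bm{x_0})$, how large must $m$ be so that a uniformly random subspace of codimension $m$ meets $\mathcal{C}$ only at the origin?

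The analytic engine is the concentration of $\|\Pi_{\mathcal{C}}(\bm g)\|$. Since metric projection onto a closed convex set is nonexpansive, $\bm g\mapsto\|\Pi_{\mathcal{C}}(\bm g)\|$ is $1$-Lipschitz, so Gaussian concentration gives $\mathbb{P}\big(\,|\,\|\Pi_{\mathcal{C}}(\bm g)\|-\mathbb{E}\|\Pi_{\mathcal{C}}(\bm g)\|\,|\ge u\,\big)\le 2e^{-u^2/2}$. Since $\mathbb{E}\|\Pi_{\mathcal{C}}(\bm g)\|^2=\delta(\mathcal{C})$, the mean and $\sqrt{\delta(\mathcal{C})}$ differ by at most $1$, so $\|\Pi_{\mathcal{C}}(\bm g)\|^2$ lies within $O(\sqrt{N}\,u)$ of $\delta(\mathcal{C})$ except with probability of order $e^{-u^2/2}$; the same holds for the polar cone $\mathcal{C}^{\circ}$, linked to $\mathcal{C}$ via $\delta(\mathcal{C})+\delta(\mathcal{C}^{\circ})=N$. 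Tuning $u\propto t$ will then give the claimed $t\sqrt N$ margin together with a Gaussian tail $e^{-ct^2}$, with $c=\tfrac14$ emerging from careful bookkeeping of the constants.

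It remains to connect this concentration to the random-subspace event, and here I see two routes. The integral-geometry route invokes the exact spherical kinematic formula, which writes $\mathbb{P}(L\cap\mathcal{C}\ne\{\bm 0\})$ as an explicit alternating sum of conic intrinsic volumes $v_j(\mathcal{C})$; the concentration above, upgraded to the whole intrinsic-volume sequence, forces $\{v_j(\mathcal{C})\}_j$ to be sharply peaked around the index $j\approx\delta(\mathcal{C})$ with spread $O(\sqrt N)$, so the kinematic sum collapses to $\approx 1$ (recovery) once $m$ exceeds $\delta(\mathcal{C})$ by a margin of order $\sqrt N$ and to $\approx 0$ (failure) once $m$ falls below it by such a margin. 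The Gaussian-comparison route instead rewrites success as $\min_{\bm v\in\mathcal{C}\cap S^{N-1}}\|A\bm v\|>0$ and applies Gordon's min--max inequality, bounding $\mathbb{E}\min_{\bm v\in\mathcal{C}\cap S^{N-1}}\|A\bm v\|\ge\gamma_m-w(\mathcal{C}\cap S^{N-1})$ with $\gamma_m=\mathbb{E}\|\bm h\|_{\ell_2^m}\ge\sqrt m-1$ and the sandwich $w(\mathcal{C}\cap S^{N-1})^2=\delta(\mathcal{C})+O(1)$, and uses the reverse Gordon inequality for the failure side; the Lipschitz concentration of the previous paragraph upgrades these expectation bounds to high-probability statements with the right rate.

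Steps one and two are routine; the real work is the last one. In the integral-geometry route the crux is proving that the conic intrinsic volumes genuinely concentrate --- this is not elementary and is obtained via a Steiner-type generating-function identity that forces $\{v_j(\mathcal{C})\}$ to mimic the distribution of $\|\Pi_{\mathcal{C}}(\bm g)\|^2$, whose mean is $\delta(\mathcal{C})$ and whose variance is $O(N)$. In the Gaussian-comparison route the delicate point is instead tracking the constants in Gordon's inequality and in the width/statistical-dimension sandwich tightly enough to reach exactly the clean rate $1-e^{-t^2/4}$, and handling the recovery and failure sides symmetrically. I expect essentially all of the effort to go there, and would lean toward the integral-geometry route since it yields the two-sided phase transition in one stroke.
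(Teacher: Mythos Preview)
The paper does not prove this theorem at all: it is quoted verbatim as a result of \cite{ALMT14} and used only as a black-box tool (together with Fact~\ref{chambers}) to compute phase transitions for monotone signals. There is therefore no ``paper's own proof'' to compare your proposal against.

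For what it is worth, your sketch is a faithful outline of how the cited reference actually establishes the result: the integral-geometry route you describe --- the conic kinematic formula combined with concentration of the conic intrinsic-volume sequence around $\delta(\mathcal{C})$ --- is precisely the argument in \cite{ALMT14}, and your Gordon-inequality alternative is the earlier one-sided approach that \cite{ALMT14} sharpened. But none of this belongs in the present paper, which deliberately imports Theorem~\ref{PTC-thm} without proof; reproducing the ALMT argument here would be out of scope.
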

%\noindent A simple geometric argument 
\vskip-0.5cm
\noindent Furthermore, \cite{ALMT14} shows that the SD can also be expressed as the expected distance from the subdifferential of $f$ at $\bm{x_0}$:%, \partial f(\bm{x_0})$:
\be
	\delta(\mathcal{D}(f,\bm{x_0})) = \mathbb{E}_{\bm{g} \sim \mathcal{N}(\bm{0},I_N)}[\min_{\tau\geq 0}\mathrm{dist}(\bm{g},\tau\partial f(\bm{x_0}))^2]
	\label{sd_sub}
\ee
\begin{theorem}[Minimax risk \citep{oymak2012relation}]
	Let $\bm{x^*}(\lambda)$ the solution of the denoising problem \eqref{PDN} with regularizer weight $\lambda$ and let %define 
	\[ \eta_f(\bm{x_0}) = \min_{\lambda\geq 0}\max_{\sigma>0}\dfrac{\mathbb{E}\|\bm{x^*}(\lambda)-\bm{x_0}\|^2}{\sigma^2}, \]
	the minimax risk for $\bm{x_0}$ over all possible $\sigma$. Then: %the following holds:
	\be \eta_f(\bm{x_0}) = \min_{\tau\geq 0}\mathbb{E}_{\bm{g}\sim\mathcal{N}(\bm{0},I)}[\mathrm{dist}(\bm{g},\tau\partial f(\bm{x_0}))^2], \label{minmax} \ee 
	where $\bm{g}$ is a standard normal vector. Moreover the risk is maximized for $\sigma\rightarrow 0$ and if $\tau^*$ is the value that minimizes \eqref{minmax}, then $\lambda^* = \tau^*\sigma$ is the optimal choice as $\sigma\rightarrow 0$.
	\label{mm_risk}
\end{theorem}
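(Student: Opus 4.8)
The plan is to establish \eqref{minmax} by reducing the denoising problem \eqref{PDN} to a fixed Gaussian projection through a noise-scaling argument, and then to locate the worst noise level at $\sigma\to0$. Throughout I write $\bm y=\bm x_0+\sigma\bm g$ with $\bm g\sim\mathcal N(\bm0,I_N)$, so that $\bm x^*(\lambda)=\mathrm{prox}_{\lambda f}(\bm x_0+\sigma\bm g)$ and the normalized risk is $R(\lambda,\sigma):=\sigma^{-2}\mathbb E\|\bm x^*(\lambda)-\bm x_0\|^2$. The first step is to shift and rescale: substituting $\bm x=\bm x_0+\sigma\bm u$ in \eqref{PDN} and dividing the objective by $\sigma^2$ shows that $\bm x^*(\lambda)-\bm x_0=\sigma\bm u^*$, where $\bm u^*=\argmin_{\bm u}\tfrac12\|\bm g-\bm u\|^2+\tau\,q_\sigma(\bm u)$, $\tau:=\lambda/\sigma$, and $q_\sigma(\bm u):=\sigma^{-1}\bigl(f(\bm x_0+\sigma\bm u)-f(\bm x_0)\bigr)$ is the difference quotient of $f$ at $\bm x_0$. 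Hence $R(\lambda,\sigma)=\mathbb E\|\bm u^*\|^2$ depends on $(\lambda,\sigma)$ only through $\tau$ and the scale $\sigma$ entering $q_\sigma$. Committing to a shape parameter $\tau$ therefore amounts to fixing the weight rule $\lambda=\tau\sigma$ (this realizes the outer $\min_\lambda$ and is exactly why the optimal weight scales linearly, $\lambda^*=\tau^*\sigma$), after which the adversary selects the worst $\sigma$ (the inner $\max_\sigma$).

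The second step isolates the dependence on $\sigma$ through $q_\sigma$ alone. Since $s\mapsto f(\bm x_0+s\bm u)$ is convex with value $f(\bm x_0)$ at $s=0$, the difference quotient $q_\sigma(\bm u)$ is non-decreasing in $\sigma>0$ for each fixed $\bm u$ and decreases, as $\sigma\downarrow0$, to the directional derivative $f'(\bm x_0;\bm u)=\sup_{\bm s\in\partial f(\bm x_0)}\langle\bm s,\bm u\rangle$, i.e.\ the support function $\sigma_{\partial f(\bm x_0)}$. Using that smaller $\sigma$ gives a pointwise smaller penalty $\tau q_\sigma$, the goal is to show that $R(\tau\sigma,\sigma)$ is non-increasing in $\sigma$, so that its supremum over $\sigma>0$ is the limit $\sigma\downarrow0$; this is the precise content of the claim that the risk is maximized for $\sigma\to0$.

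The third step evaluates that limit in closed form. As $\sigma\downarrow0$ the strongly convex objective $\tfrac12\|\bm g-\bm u\|^2+\tau q_\sigma(\bm u)$ epi-converges to $\tfrac12\|\bm g-\bm u\|^2+\tau\,\sigma_{\partial f(\bm x_0)}(\bm u)$, whose minimizer is $\mathrm{prox}_{\tau\sigma_{\partial f(\bm x_0)}}(\bm g)$. Writing $K=\partial f(\bm x_0)$ and using $\tau\sigma_K=\sigma_{\tau K}$ together with the Moreau decomposition $\bm g=\mathrm{prox}_{\sigma_{\tau K}}(\bm g)+\Pi_{\tau K}(\bm g)$, the minimizer equals $\bm g-\Pi_{\tau K}(\bm g)$, so $\|\bm u^*\|^2\to\|\bm g-\Pi_{\tau K}(\bm g)\|^2=\mathrm{dist}(\bm g,\tau\partial f(\bm x_0))^2$. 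Taking expectations gives $\lim_{\sigma\downarrow0}R(\tau\sigma,\sigma)=\mathbb E[\mathrm{dist}(\bm g,\tau\partial f(\bm x_0))^2]$, and minimizing over the free parameter $\tau$ (equivalently over the limiting weight $\lambda=\tau\sigma$) delivers \eqref{minmax} with minimizer $\tau^*$ and optimal $\lambda^*=\tau^*\sigma$ as $\sigma\to0$. In contrast to the statistical dimension \eqref{sd_sub}, the minimization over $\tau$ now sits outside the expectation, reflecting that a single weight must be committed to before the noise is drawn.

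The main obstacle is the worst-case step: converting the elementary pointwise monotonicity of $q_\sigma$ into genuine monotonicity of the risk $R(\tau\sigma,\sigma)$ in $\sigma$ (a pointwise-larger convex penalty need not shrink the minimizer when the penalty is not minimized at the origin, so a direct coupling or an envelope comparison of the two $\mathrm{prox}$ maps is required), and rigorously justifying the $\sigma\downarrow0$ limit. The latter needs convergence of the rescaled minimizers $\bm u^*$, which follows from epi-convergence of strongly convex objectives guaranteeing convergence of argmins, together with convergence of $\mathbb E\|\bm u^*\|^2$; for this I would supply a uniform integrability or domination bound, exploiting that $\bm g\mapsto\mathrm{prox}(\bm g)$ is $1$-Lipschitz to control the tails uniformly in $\sigma$.
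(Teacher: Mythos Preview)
The paper does not supply a proof of this statement: Theorem~\ref{mm_risk} is quoted from \cite{oymak2012relation} as a background tool and used as a black box, so there is no in-paper argument to compare your proposal against. Your three-step outline---rescale to expose the pair $(\tau,q_\sigma)$ with $\tau=\lambda/\sigma$, use convexity of $s\mapsto f(\bm{x_0}+s\bm u)$ to get monotonicity of the difference quotient $q_\sigma$, and pass to the $\sigma\downarrow0$ limit via the Moreau identity $\mathrm{prox}_{\sigma_{\tau K}}(\bm g)=\bm g-\Pi_{\tau K}(\bm g)$---is essentially the route taken in the cited reference, and the two obstacles you flag (upgrading pointwise monotonicity of $q_\sigma$ to monotonicity of the risk, and dominated convergence for the $\sigma\downarrow0$ limit) are exactly the nontrivial technical points there.

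One remark on your Step~1 is worth making explicit rather than glossing over. The displayed definition fixes a \emph{constant} $\lambda$ before the adversary chooses $\sigma$, whereas you pass to a \emph{rule} $\lambda=\tau\sigma$ and call this ``realizing the outer $\min_\lambda$''. These are different minimax problems: choosing a rule is at least as favorable to the minimizer, so a priori your argument yields only $\min_{\tau}\max_{\sigma}R(\tau\sigma,\sigma)\le\eta_f(\bm{x_0})$. In fact, for finite-valued $f$ and any fixed $\lambda>0$ one has $\tau=\lambda/\sigma\to0$ as $\sigma\to\infty$, whence $\bm u^\ast\to\bm g$ and $R(\lambda,\sigma)\to N$, so the literal $\min_\lambda\max_\sigma$ is trivially $N$. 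The meaningful quantity---and what \cite{oymak2012relation} actually establish---is the max--min $\sup_\sigma\inf_\lambda R(\lambda,\sigma)$, equivalently the minimum over scale-adaptive rules $\lambda=\tau\sigma$; the paper's phrasing is informal on this point. Your reparametrization is therefore not a sleight of hand but the correct reading of the statement, and saying so would close what otherwise looks like a gap in the reduction.
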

\noindent The similarity between \eqref{sd_sub} and \eqref{minmax} is striking and actually \cite{ALMT14} proves that the two quantities are indeed close:
\[
	\delta(\mathcal{D}(f,\bm{x_0})) \leq \eta_f(\bm{x_0}) \leq \delta(\mathcal{D}(f,\bm{x_0})) + 2\dfrac{\underset{\tiny{\bm{w}\in\partial f(\bm{x_0})}}{\sup}\|\bm{w}\|}{f(\bm{x_0}/\|\bm{x_0}\|)}.
\]

\section{Phase transitions for the recovery of sparsely varying monotone signals}

We consider signals $\bm{x}\in\mathbb{R}^N$ that are increasing, i.e., $\bm{x}(i+1) \geq \bm{x}(i)$ and are sparsely varying, i.e, $\bm{x}(i+1) > \bm{x}(i)$ for a number of $k$ indexes. %, with $k \ll N$ typically. %We first consider signals that are nonnegative and assume that $\bm{x}(0) = 0$, but we later relax this assumption. 
A convex function that promotes this structure can be derived by restricting the total variation (TV) norm to the space of monotone signals:
\be
	f(\bm{x}) = \left\{ \begin{array}{ll} \bm{x}(N) - \bm{x}(1), & \bm{x}(i+1) \geq \bm{x}(i), i \in [N-1] \\ \infty, & \text{otherwise.} \end{array} \right.
	\label{NRTV}
\ee
where $[N] = \{1,2,\ldots,N\}$. %We call the norm defined in \eqref{NRTV} the monotone total variation (MTV) norm. 
Our results rely heavily on the following calculation of the SDs of the cones induced by monotone signals, proven in \citet[App.~C.4]{ALMT14}.
\begin{fact}
	Let the cones 
	\[ \begin{split}
		\mathcal{C}_1^N & = \{ \bm{x} \in \mathbb{R}^N: \bm{x}(1) \leq \bm{x}(2) \leq \ldots \leq \bm{x}(N) \} \\ %\quad \text{and} \quad 
		\mathcal{C}_2^N & = \{ \bm{x} \in \mathbb{R}^N: 0 \leq \bm{x}(1) \leq \bm{x}(2) \leq \ldots \leq \bm{x}(N)  \}.
	\end{split} \]
	Then we  have $\delta(\mathcal{C}_1^N) = H_N$, and $\delta(\mathcal{C}_2^N) = \frac{1}{2}H_N$, where $H_N = \sum_{i=1}^N\frac{1}{i}$, denotes the $N$-th harmonic number.
	\label{chambers}
\end{fact}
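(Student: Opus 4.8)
The plan is to exploit that for a \emph{polyhedral} cone $\mathcal{C}$ the statistical dimension equals the expected dimension of the face of $\mathcal{C}$ in whose relative interior the projection $\Pi_{\mathcal{C}}(\bm{g})$ lands, i.e. $\delta(\mathcal{C}) = \mathbb{E}_{\bm{g}}[\dim F(\bm{g})]$ (with probability one this face is unique). This is the polyhedral form of the statistical-dimension calculus of \citep{ALMT14}; one quick way to see it is $\|\Pi_{\mathcal{C}}(\bm{g})\|^2 = \langle \Pi_{\mathcal{C}}(\bm{g}),\bm{g}\rangle$ combined with Gaussian integration by parts, since $\Pi_{\mathcal{C}}$ is $1$-Lipschitz and, near a generic $\bm{g}$, agrees with the linear projection onto $\mathrm{span}\,F(\bm{g})$, whose divergence is $\dim F(\bm{g})$. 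Thus the whole problem reduces to computing an expected face dimension for each of the two cones.

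For $\mathcal{C}_1^N$: its faces are obtained by turning some of the inequalities $\bm{x}(i)\le\bm{x}(i+1)$ into equalities, so a face is indexed by a partition of $[N]$ into consecutive blocks, and a partition into $J$ blocks spans a face of dimension $J$. For generic $\bm{g}$ the projection $\Pi_{\mathcal{C}_1^N}(\bm{g})$ is the isotonic regression of $\bm{g}$, which a.s. lies in the relative interior of the face given by its own constant blocks; hence $\dim F(\bm{g}) = J_N(\bm{g})$, the number of constant pieces of the isotonic regression, and $\delta(\mathcal{C}_1^N) = \mathbb{E}[J_N(\bm{g})]$. Now $J_N(\bm{g})$ is precisely the number of faces of the greatest convex minorant of the random walk $(i,S_i)_{i=0}^N$ with $S_i = \sum_{j\le i}\bm{g}(j)$, and for i.i.d.\ continuous increments the Sparre Andersen / Spitzer theory of the convex minorant gives that $J_N(\bm{g})$ is distributed as the number of cycles of a uniformly random permutation of $[N]$; in particular $\mathbb{E}[J_N(\bm{g})] = H_N$. (Equivalently one obtains $H_N$ from the recursion $\mathbb{E}[J_N] = 1 + \tfrac1N\sum_{k<N}\mathbb{E}[J_k]$, which follows from the fact that the length of the first constant block is uniform on $\{1,\dots,N\}$.) This gives $\delta(\mathcal{C}_1^N) = H_N$.

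For $\mathcal{C}_2^N$: first record that $\Pi_{\mathcal{C}_2^N}(\bm{g})$ is the coordinatewise positive part of the isotonic regression of $\bm{g}$. It is monotone and nonnegative, hence feasible, and optimality follows by splitting the coordinates into the prefix on which the isotonic regression is $\le 0$ and the complement (the latter being itself the isotonic regression of the corresponding sub-vector, since a sign change of block means is a block boundary, so a short Abel-summation argument verifies the variational inequality), or equivalently via the pool-adjacent-violators argument with the origin pinned at the front with infinite weight. Since any face of $\mathcal{C}_2^N$ on which $\bm{x}(1)=0$ is active contributes dimension $0$ for its leading zero block, $\dim F(\bm{g})$ equals the number of \emph{strictly positive} constant blocks of the isotonic regression, so $\delta(\mathcal{C}_2^N) = \mathbb{E}[P_N(\bm{g})]$ with $P_N$ counting positive blocks. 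Finally, the reflection $\bm{g}\mapsto -\bm{g}$ together with index reversal sends the isotonic regression of $\bm{g}$ to (minus, reversed) the isotonic regression of the reversed $\bm{g}$, hence swaps positive and negative blocks; since both $-\bm{g}$ and the reversal of $\bm{g}$ are again standard Gaussian, the number $P_N$ of positive blocks and the number $M_N$ of negative blocks are equidistributed. As a.s.\ no block mean equals $0$ we have $P_N + M_N = J_N$, hence $\delta(\mathcal{C}_2^N) = \mathbb{E}[P_N] = \tfrac12\mathbb{E}[J_N] = \tfrac12 H_N$.

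The only genuinely delicate point is the identity $\mathbb{E}[J_N(\bm{g})] = H_N$; everything else is bookkeeping about polyhedral faces plus a reflection symmetry. I would either invoke the classical convex-minorant / cycle-length theorem directly, or make the step self-contained by proving the equivalent statement that the first block of the isotonic regression has length uniform on $\{1,\dots,N\}$ via the cyclic-shift (exchangeability) argument for random walks with continuous increments; the remainder of the proof needs nothing beyond the face-dimension formula.
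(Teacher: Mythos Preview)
The paper does not prove this Fact; it simply cites \citet[App.~C.4]{ALMT14}. Your argument is correct and self-contained. The approach in \citep{ALMT14} identifies $\mathcal{C}_1^N$ and $\mathcal{C}_2^N$ with Weyl chambers of the reflection groups of types $A_{N-1}$ and $B_N$, reads off their conic intrinsic volumes from the chamber combinatorics (for $\mathcal{C}_1^N$ one gets $v_k = \genfrac{[}{]}{0pt}{}{N}{k}/N!$, the signless Stirling numbers of the first kind), and evaluates $\delta = \sum_k k\,v_k$ via the generating-function identity $\sum_k \genfrac{[}{]}{0pt}{}{N}{k}x^k = x(x+1)\cdots(x+N-1)$. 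Your route is the probabilistic twin: you use the same face-dimension identity $\delta(\mathcal{C}) = \mathbb{E}[\dim F(\bm{g})]$, but evaluate the expectation through isotonic regression and the Sparre Andersen/Goldie description of the greatest convex minorant of an i.i.d.\ random walk, which says the block-length multiset is distributed like the cycle-length multiset of a uniform random permutation. The two computations are really the same---Stirling numbers count permutations by number of cycles---but yours avoids any hyperplane-arrangement machinery and makes the factor $\tfrac12$ for $\mathcal{C}_2^N$ fall out of a pure sign-and-reversal symmetry rather than a separate type-$B$ calculation. The one spot worth tightening is the claim $\Pi_{\mathcal{C}_2^N}(\bm{g}) = (\Pi_{\mathcal{C}_1^N}(\bm{g}))_+$: your parenthetical justification is correct but terse; a clean verification is to check Moreau's conditions directly, using that $(\mathcal{C}_2^N)^\circ = \{\bm{w}:\sum_{i\ge j}\bm{w}(i)\le 0\ \forall j\}\supset(\mathcal{C}_1^N)^\circ$ and that on each constant block of the isotonic regression the residuals $\bm{g}-\hat{\bm{y}}$ sum to zero.
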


\subsection{Computation of the statistical dimension}
%For simplicity we present our results for the case when $\bm{x_0}(0) = 0$. In this case the total variation norm restricted to the cone of monotonically increasing signals becomes
According to Theorem \ref{PTC-thm} to compute the PTC for the \ref{PCS} problem, we need to characterize $\mathcal{D}(f,\bm{x_0})$. %the descent cone of $f$ at the true point $\bm{x_0}$.
\begin{lemma}
	Let $\Omega = \{ i \in \{2,\ldots,N\} : \bm{x_0}(i) > \bm{x_0}(i-1) \}$ and define $i_1 < i_2 < \ldots < i_k$ the elements of $\Omega$ in increasing order. The descent cone of the norm $f$ of \eqref{NRTV} at $\bm{x_0}$ is given by
	\be 
%		\mathcal{D}(f,\bm{x_0}) = \left\{\bm{y}\in\mathbb{R}^N:  
%						\begin{array}{c}   \bm{y}(i_1) \leq \bm{y}(i_1+1) \leq \ldots \leq \bm{y}(i_2 - 1) \\
%									\vdots \\
%									\bm{y}(i_{k-1}) \leq \bm{y}(i_{k-1} + 1) \leq \ldots \leq \bm{y}(i_k -1) \\
%									  \bm{y}(i_k) \leq \bm{y}(i_k+1) \leq \ldots \leq \bm{y}(N) \leq \bm{y}(1)  \leq \ldots \leq \bm{y}(i_1-1) 
%						\end{array}\right\}.
		\begin{split}
		&\mathcal{D}(f,\bm{x_0})  = \left\{\bm{y}\in\mathbb{R}^N: \right. \\
		&				 \left.
						\begin{array}{c}   \bm{y}(i_1) \leq \bm{y}(i_1+1) \leq \ldots \leq \bm{y}(i_2 - 1) \\
									\vdots \\
									\bm{y}(i_{k-1}) \leq \bm{y}(i_{k-1} + 1) \leq \ldots \leq \bm{y}(i_k -1) \\
									  \bm{y}(i_k) \leq \ldots \leq \bm{y}(N) \leq \bm{y}(1)  \leq \ldots \leq \bm{y}(i_1-1) 
						\end{array}\right\}.		\end{split}
		\label{desc_con_neg}
	\ee
	\label{desc_mon_neg}
\end{lemma}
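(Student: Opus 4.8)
The plan is to unwind Definition~\ref{DC-def} directly. Fix $\bm{y}\in\mathbb{R}^N$ and $\tau>0$, and rewrite the defining inequality $f(\bm{x_0}+\tau\bm{y})\le f(\bm{x_0})$. Because $f$ equals $+\infty$ off the monotone cone, for a given $\tau$ this inequality holds if and only if two conditions are met: (i) $\bm{x_0}+\tau\bm{y}$ is itself monotone, i.e. $\big(\bm{x_0}(i+1)-\bm{x_0}(i)\big)+\tau\big(\bm{y}(i+1)-\bm{y}(i)\big)\ge 0$ for all $i\in[N-1]$; and (ii), evaluating the linear form of $f$ on the monotone cone, $\tau\big(\bm{y}(N)-\bm{y}(1)\big)\le 0$, i.e. $\bm{y}(N)\le\bm{y}(1)$. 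Note that (ii) is already independent of $\tau$.

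Next I would split the monotonicity constraints in (i) according to whether $i+1\in\Omega$. If $i+1\notin\Omega$ then, by the definition of $\Omega$ together with monotonicity of $\bm{x_0}$, we have $\bm{x_0}(i+1)=\bm{x_0}(i)$, so the $i$-th constraint collapses to the $\tau$-free inequality $\bm{y}(i+1)\ge\bm{y}(i)$. If $i+1\in\Omega$ then $\bm{x_0}(i+1)-\bm{x_0}(i)=:\Delta_i>0$, and $\Delta_i+\tau\big(\bm{y}(i+1)-\bm{y}(i)\big)\ge 0$ holds for every sufficiently small $\tau>0$ (for all $\tau>0$ if $\bm{y}(i+1)\ge\bm{y}(i)$, and for $0<\tau\le\Delta_i/(\bm{y}(i)-\bm{y}(i+1))$ otherwise). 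Since $\Omega$ is finite, there is one threshold $\tau_0(\bm{y})>0$ below which all of these ``jump'' constraints hold simultaneously.

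Both inclusions are then short. If $\bm{y}\in\mathcal{D}(f,\bm{x_0})$, then $f(\bm{x_0}+\tau\bm{y})\le f(\bm{x_0})$ for some $\tau>0$, hence (ii) holds and $\bm{y}(i+1)\ge\bm{y}(i)$ holds for every $i+1\notin\Omega$; collecting these into maximal chains over the blocks $\{i_j,\dots,i_{j+1}-1\}$ for $j=1,\dots,k-1$, and chaining the last block $\{i_k,\dots,N\}$ through $\bm{y}(N)\le\bm{y}(1)$ into the first block $\{1,\dots,i_1-1\}$, yields precisely the system \eqref{desc_con_neg}. Conversely, if $\bm{y}$ satisfies \eqref{desc_con_neg}, then (ii) holds, and choosing any $0<\tau\le\tau_0(\bm{y})$ makes all constraints in (i) hold, so $\bm{x_0}+\tau\bm{y}$ is monotone and $f(\bm{x_0}+\tau\bm{y})=f(\bm{x_0})+\tau\big(\bm{y}(N)-\bm{y}(1)\big)\le f(\bm{x_0})$, giving $\bm{y}\in\mathcal{D}(f,\bm{x_0})$.

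I expect the only genuine subtlety to be the handling of the union over $\tau$: one must observe that the constraints coming from the strict-increase locations $\Omega$ each carry strictly positive slack at $\tau=0$ and are finite in number, so they never exclude a vector from the descent cone; the cone is cut out entirely by the flat-region orderings plus the single endpoint inequality $\bm{y}(N)\le\bm{y}(1)$ (in particular, the union in Definition~\ref{DC-def} is already closed here, so no closure step is needed). The rest -- repackaging the surviving inequalities into the displayed chains, and checking the degenerate cases where $\{1,\dots,i_1-1\}$ or $\{i_k,\dots,N\}$ is empty or a singleton -- is routine bookkeeping.
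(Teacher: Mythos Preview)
Your proposal is correct and follows essentially the same approach as the paper's proof: both unwind Definition~\ref{DC-def}, separate the monotonicity constraints at indices in $\Omega$ (where a sufficiently small $\tau$ makes the constraint vacuous) from those in $\Omega^c$ (which force $\bm{y}(i)\ge\bm{y}(i-1)$ independently of $\tau$), and combine with the endpoint inequality $\bm{y}(N)\le\bm{y}(1)$. Your version is more explicit about the two inclusions, the uniform threshold $\tau_0(\bm{y})$, and the closure and degenerate-block edge cases, but the underlying argument is the same.
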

\begin{proof}
	From Definition \ref{DC-def}, $\bm{y} \in \mathcal{D}(f,\bm{x_0})$ if there exists $\tau > 0$, such that $\bm{x_0} + \tau\bm{y}$ is monotone, and $f(\bm{x_0} + \tau\bm{y}) \leq f(\bm{x_0})$:
	\[ \begin{split}
		\bm{x_0}(N) + \tau\bm{y}(N) - \bm{x_0}(1) -  \tau\bm{y}(1) &\leq \bm{x_0}(N) - \bm{x_0}(1) \Rightarrow \\ \bm{y}(N) &\leq \bm{y}(1).
		\label{dec-cond}
	\end{split}
	\]
	For the monotonicity of $\bm{x_0} + \tau\bm{y}$, we consider two cases: If $i \not\in \Omega$, then $\bm{x_0}(i) = \bm{x_0}(i-1)$, and %the condition becomes 
			\[
				\bm{x_0}(i) + \tau\bm{y}(i) \geq \bm{x_0}(i-1)+ \tau\bm{y}(i-1) \Rightarrow \bm{y}(i) \geq \bm{y}(i-1).
			\]
	If $i \in \Omega$, then $\bm{x_0}(i) > \bm{x_0}(i-1)$ and $\bm{y}(i)$ can be chosen arbitrarily since there is always a small enough $\tau$ that will preserve monotonicity.
%	\begin{enumerate}
%		\item If $i \not\in \Omega$, then $\bm{x_0}(i) = \bm{x_0}(i-1)$, and %the condition becomes 
%			\[
%				\bm{x_0}(i) + \tau\bm{y}(i) \geq \bm{x_0}(i-1)+ \tau\bm{y}(i-1) \Rightarrow \bm{y}(i) \geq \bm{y}(i-1).
%			\]
%		\item If $i \in \Omega$, then $\bm{x_0}(i) > \bm{x_0}(i-1)$ and $\bm{y}(i)$ can be chosen arbitrarily, since for any choice there exists a small enough $\tau$ that will preserve monotonicity. 
%	\end{enumerate}
	Combining everything we get \eqref{desc_con_neg}. 
\end{proof}

\noindent Lemma \ref{desc_mon_neg} states that the descent cone $\mathcal{D}(f,\bm{x_0})$ can be expressed as the product of $k$ disjoint convex cones of monotonically increasing signals. Using Fact \ref{chambers}, we derive the following simple formula for $\delta(\mathcal{D}(f,\bm{x_0}))$ as the sum of the SDs of the simpler disjoint cones.
\begin{theorem}
Let $\Omega = \{ i \in \{2,\ldots,N\} : \bm{x_0}(i) > \bm{x_0}(i-1) \}$ and define $i_1 < i_2 < \ldots < i_k$ the elements of $\Omega$ in increasing order. %Define $i_0 = 1$ and $i_{k+1} = N+1$. 
The SD of the descent cone at $\bm{x_0}$ equals
\be
	\delta(\mathcal{D}(f,\bm{x_0})) =  \sum_{j=2}^{k}H_{i_j-i_{j-1}} + H_{N+i_1-i_k}.
\ee
%where $H_d$ denotes the $d$-th harmonic number $H_d = \sum_{i=1}^d\frac{1}{d}$.
\end{theorem}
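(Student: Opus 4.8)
The plan is to read off from Lemma~\ref{desc_mon_neg} that $\mathcal{D}(f,\bm{x_0})$ is, up to an orthogonal change of coordinates, an orthogonal product of $k$ ``chain cones'' of the form $\mathcal{C}_1^{\ell}$, then invoke the additivity of the statistical dimension over such products, and finish by plugging in Fact~\ref{chambers}. Concretely, partition the index set $[N]$ into the $k$ blocks
\[ B_j = \{i_{j-1}, i_{j-1}+1, \ldots, i_j-1\}, \ j = 2,\ldots,k, \qquad B_1 = \{i_k, i_k+1,\ldots,N\} \cup \{1,2,\ldots,i_1-1\}. \]
These are pairwise disjoint with $\bigcup_j B_j = [N]$; their cardinalities are $|B_j| = i_j - i_{j-1}$ for $j\ge 2$ and $|B_1| = N + i_1 - i_k$, and indeed $\sum_j |B_j| = N$. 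Inspecting \eqref{desc_con_neg}, no defining inequality couples coordinates from two different blocks, and restricted to a single block the constraints are exactly a totally ordered chain of $\leq$'s on the coordinates of that block, listed in a fixed order (for $B_1$ this is the cyclic order $i_k,\ldots,N,1,\ldots,i_1-1$). Relabelling the coordinates of each block according to this order is an orthogonal map on $\mathbb{R}^{B_j}$ that carries the block's constraint set onto the cone $\mathcal{C}_1^{|B_j|}$ of Fact~\ref{chambers}. Writing $\mathbb{R}^N = \bigoplus_{j} \mathbb{R}^{B_j}$, we thus get an isometry under which $\mathcal{D}(f,\bm{x_0})$ (a closed convex cone, being defined by homogeneous non-strict inequalities) is identified with $\mathcal{C}_1^{|B_1|} \times \mathcal{C}_1^{|B_2|} \times \cdots \times \mathcal{C}_1^{|B_k|}$.

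\textbf{Additivity of the statistical dimension and conclusion.} Next I would record that $\delta$ is additive over orthogonal products: if $\mathcal{C}_A\subseteq V_A$ and $\mathcal{C}_B\subseteq V_B$ are closed convex cones with $V_A\perp V_B$, then a standard Gaussian $\bm{g}$ on $V_A\oplus V_B$ splits as $\bm{g}=\bm{g}_A+\bm{g}_B$ with $\bm{g}_A,\bm{g}_B$ independent standard Gaussians, the projection onto the product is the sum of the projections onto the factors, hence $\|\Pi_{\mathcal{C}_A\times\mathcal{C}_B}(\bm{g})\|^2 = \|\Pi_{\mathcal{C}_A}(\bm{g}_A)\|^2 + \|\Pi_{\mathcal{C}_B}(\bm{g}_B)\|^2$, and taking expectations gives $\delta(\mathcal{C}_A\times\mathcal{C}_B)=\delta(\mathcal{C}_A)+\delta(\mathcal{C}_B)$; moreover $\delta$ is invariant under orthogonal coordinate changes since the Gaussian measure is rotation invariant. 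Applying this $k-1$ times across the blocks of the previous paragraph and then invoking Fact~\ref{chambers} ($\delta(\mathcal{C}_1^{\ell})=H_\ell$) yields
\[ \delta(\mathcal{D}(f,\bm{x_0})) = \sum_{j=1}^{k}\delta\!\left(\mathcal{C}_1^{|B_j|}\right) = H_{N+i_1-i_k} + \sum_{j=2}^{k} H_{i_j-i_{j-1}}, \]
which is the asserted formula. (In the degenerate case $k=1$ only the wraparound block survives, $|B_1|=N$, and the formula correctly reduces to $H_N$.)

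\textbf{Anticipated obstacle.} I do not expect a genuine difficulty here; the result is essentially bookkeeping on top of Lemma~\ref{desc_mon_neg} and Fact~\ref{chambers}. The one point requiring care is the wraparound block $B_1$: one must check that its cardinality is $N+i_1-i_k$ and that the cyclic relabelling really does turn the chain $\bm{y}(i_k)\le\cdots\le\bm{y}(N)\le\bm{y}(1)\le\cdots\le\bm{y}(i_1-1)$ into the standard increasing-coordinate constraint defining $\mathcal{C}_1^{N+i_1-i_k}$. The additivity step is routine but should be stated explicitly (or cited from \cite{ALMT14}), since the whole closed-form expression hinges on decomposing the Gaussian vector across the orthogonal blocks.
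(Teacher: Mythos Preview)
Your proposal is correct and matches the paper's own argument: the paper observes (in the paragraph preceding the theorem) that Lemma~\ref{desc_mon_neg} exhibits $\mathcal{D}(f,\bm{x_0})$ as a product of $k$ disjoint monotone cones, and then obtains the formula by summing the statistical dimensions from Fact~\ref{chambers}. Your write-up simply fleshes out that one-line sketch, including the additivity/rotation-invariance of $\delta$ and the wraparound block count, which the paper leaves implicit.
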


\subsection{Dependence on the change points location} 
\noindent The closed form of the SD allows for a characterization of the worst case analysis for a given number of variations $k$. These locations have to occur periodically every $N/k$ steps, with $i_1 \approx N/2k$. If $r_{N,k} = \mathrm{mod}(N,k)$, then the SD becomes $(k-r_{N,k})H_{[N/k]} + r_{N,k}H_{[N/k]+1}.$, where $[\cdot]$ here denotes the integer part. For moderately large $N/k$ this converges to $kH_{N/k} \rightarrow k(\log(N/k) + \gamma)$, where $\gamma \approx 0.577$ is the Euler-Mascheroni constant. 
%\textbf{The choice of the optimal regularizer appears to be more challenging}

Similarly, the best case occurs when all change points occur consecutively. In this case the SD becomes $(k-1) + H_{N+1-k}$. What is perhaps of most interest is the average SD under certain distribution assumptions of the $k$ change points. We can  asymptotically compute this in the case where these $k$ points are distributed uniformly at random.
\begin{theorem}
	Assume that the $k$ change points are chosen uniformly at random and let $N,k\rightarrow\infty$ with $k/N = \varepsilon$, $0< \varepsilon < 1$. Define $\delta_{U}(\varepsilon)$ the normalized (divided by the ambient dimension) SD averaged over all possible choices of $k=\varepsilon N$ ``jump" points. Then we have
	\be
		\delta_{U}(\varepsilon) = \dfrac{\varepsilon\log(1/\varepsilon)}{1-\varepsilon}.
		\label{ptc-avg}
	\ee
\end{theorem}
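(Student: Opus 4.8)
The plan is to reduce the averaged statistical dimension to a sum over the ``gaps'' between consecutive change points and then pass to the limit by dominated convergence. Starting from the closed form $\delta(\mathcal{D}(f,\bm{x_0})) = \sum_{j=2}^{k}H_{i_j-i_{j-1}} + H_{N+i_1-i_k}$ of the previous theorem, I would first re-encode the jump set in gap coordinates: writing $i_1<\dots<i_k$ for the chosen elements of $\{2,\dots,N\}$, put $d_0 = i_1-2$, $d_j = i_{j+1}-i_j-1$ for $1\le j\le k-1$, and $d_k = N-i_k$. This is a bijection between $k$-subsets of $\{2,\dots,N\}$ and nonnegative integer tuples $(d_0,\dots,d_k)$ with $\sum_{j=0}^k d_j = N-1-k$, so under the uniform law on jump sets the vector $(d_0,\dots,d_k)$ is uniform over that lattice simplex, hence exchangeable. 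In these coordinates the SD becomes $\delta = \sum_{j=1}^{k-1}H_{d_j+1} + H_{d_0+d_k+2}$ (the last term being the wrap-around gap $N+i_1-i_k$).

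By linearity of expectation and exchangeability, $\mathbb{E}[\delta] = (k-1)\,\mathbb{E}[H_{d_1+1}] + \mathbb{E}[H_{d_0+d_k+2}]$. The wrap-around term is negligible after normalization by the ambient dimension: $d_0+d_k+2 \le N+1-k \le N$, so $H_{d_0+d_k+2}\le H_N = O(\log N)$ and $N^{-1}\mathbb{E}[H_{d_0+d_k+2}]\to 0$. For the main term I would use the representation $H_m = \sum_{\ell\ge 0}\frac{1}{\ell+1}\mathbf{1}[\ell\le m-1]$, which by Tonelli (all summands nonnegative) gives $\mathbb{E}[H_{d_1+1}] = \sum_{\ell\ge0}\frac{1}{\ell+1}\Pr[d_1\ge\ell]$, and then compute the tail probability exactly by elementary counting: $\Pr[d_1\ge\ell] = \binom{N-1-\ell}{k}\big/\binom{N-1}{k} = \prod_{j=1}^{\ell}\frac{N-j-k}{N-j}$. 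For each fixed $\ell$ this converges to $(1-\varepsilon)^\ell$ as $N,k\to\infty$ with $k/N\to\varepsilon$.

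Since each factor satisfies $\frac{N-j-k}{N-j}\le 1-\frac{k}{N}$, for all large $N$ we get the uniform domination $\frac{1}{\ell+1}\Pr[d_1\ge\ell]\le (1-\varepsilon/2)^{\ell}$, summable independently of $N$. Dominated convergence for series then yields $\mathbb{E}[H_{d_1+1}]\to\sum_{\ell\ge0}\frac{(1-\varepsilon)^\ell}{\ell+1} = \frac{1}{1-\varepsilon}\sum_{m\ge1}\frac{(1-\varepsilon)^m}{m} = \frac{-\log\varepsilon}{1-\varepsilon}$. Combining the pieces, $\delta_U(\varepsilon) = \lim_{N\to\infty} N^{-1}\mathbb{E}[\delta] = \lim_{N\to\infty}\frac{k-1}{N}\cdot\frac{-\log\varepsilon}{1-\varepsilon} = \frac{\varepsilon\log(1/\varepsilon)}{1-\varepsilon}$, which is the claim. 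As a sanity check, letting $\varepsilon\to 1$ gives $\delta_U\to 1$, consistent with the fact that when every index is a change point the monotonicity constraint is vacuous and the descent cone is all of $\mathbb{R}^N$; and $\varepsilon\to 0$ gives $\delta_U\to 0$.

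The only genuinely delicate step is the interchange of the limit $N\to\infty$ with the infinite sum over $\ell$ in $\mathbb{E}[H_{d_1+1}]$; everything else is bookkeeping. What makes it clean is that the exact tail $\Pr[d_1\ge\ell]$ factors as a telescoping product each of whose factors is bounded by $1-k/N$, giving a geometric dominating series uniform in $N$. One should also verify explicitly that the wrap-around gap $H_{d_0+d_k+2}$ — the single place where the boundary convention (index $1$ never being a change point) enters — does not require its own limit law and is killed by the crude $O(\log N)$ bound, which it is.
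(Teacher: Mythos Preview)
Your argument is correct and follows the same strategy as the paper: express the SD as a sum of harmonic numbers of the gap lengths between consecutive change points, exploit exchangeability so that only the marginal law of a single gap matters, use that this law is asymptotically geometric with parameter $\varepsilon$, and evaluate the resulting series to $-\log\varepsilon/(1-\varepsilon)$. The paper's proof is terser---it simply asserts the geometric limit and computes $\varepsilon\,\mathbb{E}[H_l]$ for $l\sim\mathrm{Geom}(\varepsilon)$ by summing over the values of $l$---whereas you work with the exact finite-$N$ tail $\Pr[d_1\ge\ell]=\binom{N-1-\ell}{k}/\binom{N-1}{k}$, supply an explicit dominating geometric series to justify the $N\to\infty$ interchange, and isolate the wrap-around term with a crude $O(\log N)$ bound; this adds rigor the paper omits but does not change the underlying idea.
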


%\begin{proof}
\proof
	Let $i_1<i_2<\ldots<i_k$ the change points selected uniformly randomly and define the sequence of lengths $l_j = i_{j+1} - i_j$ for $j\in[k-1]$ and $l_k = N - i_k + i_1$. %Then $l_1,l_2,\ldots, l_k$ are identically distributed. 
	When $N,k\rightarrow\infty$ the distribution of each $l_j$ converges to a geometric distribution with parameter $\varepsilon = k/N$.
	%\[ \mathbb{P}(l_j = n) = (1-\varepsilon)^{n-1}\varepsilon. \]
	Then we have 
	\[ \begin{split}
		\delta_{U}(\varepsilon)  &= \lim_{N\rightarrow\infty}\dfrac{1}{N}\mathbb{E}\left[\sum_{j=1}^kH_{l_j}\right] = \varepsilon\mathbb{E}\left[H_{l_j}\right]\\
						  % &= \lim_{N\rightarrow\infty}\dfrac{k}{N}\sum_{n=1}^{N-k+1}H_{n} \left(1-\frac{k}{N}\right)^{n-1}\frac{k}{N} \\ 
						  & = \varepsilon^2 \sum_{n=1}^{\infty}H_n(1-\varepsilon)^{n-1} 
						   = \dfrac{\varepsilon^2}{1-\varepsilon}  \sum_{n=1}^{\infty} \dfrac{1}{n}\sum_{m=n}^{\infty}(1-\varepsilon)^{m} \\
						  & = \dfrac{\varepsilon^2}{1-\varepsilon} \sum_{n=1}^{\infty} \dfrac{1}{n} \dfrac{(1-\varepsilon)^{n}}{\varepsilon} =  \dfrac{\varepsilon\log(1/\varepsilon)}{1-\varepsilon}.%= \\ 
	\qed					 % & = \dfrac{\varepsilon\log(1/\varepsilon)}{1-\varepsilon}.
	\end{split} 
	%\label{asym}
	\]  %end{equation}
%\end{proof}

\begin{figure}
	\centering
	\includegraphics[trim={0mm 0mm 0mm 9.5mm}, clip, width=0.5\textwidth]{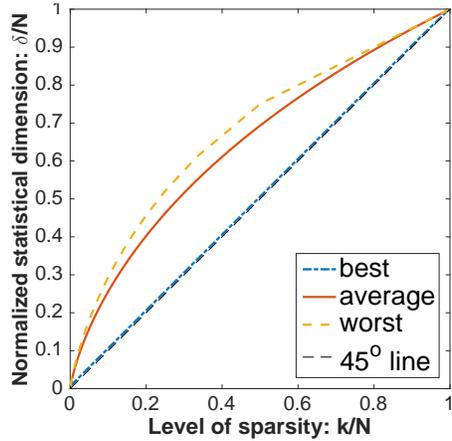}
	\caption{Behavior of the SD as a function of the degree sparsity and location of change points. The best (blue, dash-dot), average (red, solid), and worst (yellow, dashed) cases are shown.}
	\label{PTC-cases}
\end{figure}
\noindent Fig.~\ref{PTC-cases} shows the three different cases for the SD, and illustrates its dependence on the location of the jump points.
\begin{figure}
	\centering
	\includegraphics[trim={33mm 0mm 24mm 1.5mm}, clip, width=.75\textwidth]{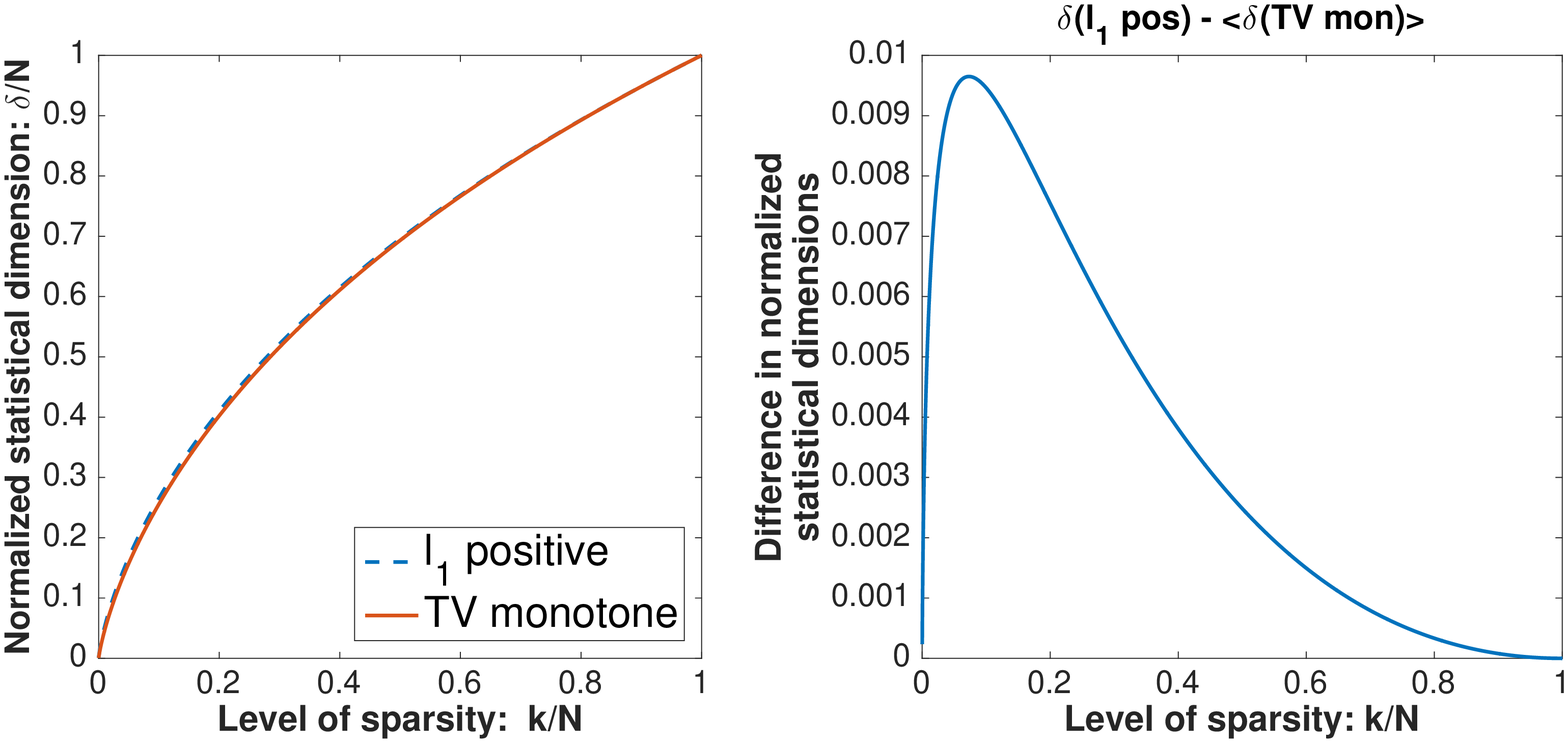}
	\caption{Comparison of the SD for monotone sparsely varying signals and sparse non-negative signals. The SD for reconstructing sparse non-negative signals (red, solid) as computed in \cite{DMM09} compared to the asymptotic average limit of \eqref{ptc-avg} (blue, dashed). Somewhat surprisingly, the two curves almost coincide with the positive $l_1$ norm PTC being always larger than the PTC of for the monotone signals for the same number of variations. The difference is plot in the right panel.}
	\label{PTC-plots}
\end{figure}

 In Fig.~\ref{PTC-plots} we plot the SD as computed in \eqref{ptc-avg} (blue), compared to the PTC for the reconstruction of sparse non-negative signals (red) as this is computed in \cite{DMM09}, using the $l_1$ norm restricted to non-negative signals as the structure inducing function $f$ and solving \eqref{PCS}. The two curves are very near, although the PTC curve for sparse non-negative signals is slightly larger. The difference between the two different curves (Fig.~\ref{PTC-plots} right) attains a maximum of $\approx0.0096$ for $k/N \approx 0.0731$. %To examine the accuracy of this difference in practice we performed the following simulation. 
 We examined this difference in practice: 
 For the sparse non-negative signal we considered signals $\bm{x_0}\in\mathbb{R}^N, N = 1000$, with $k=73$ non-zero entries. Then random Gaussian matrices $A\in\mathbb{R}^{m\times N}$ were constructed, with $m=201,\ldots,220$, and we tried to reconstruct $\bm{x_0}$ from the samples $A\bm{x_0}$ by solving \eqref{PCS}. We solved the same problem also for the case of sparsely varying increasing signals, where now $k=73$, refers to the number of change points, chosen uniformly at random. For each $m$ we performed 300 iterations, and the reconstruction $\hat{\bm{x}}$ was deemed successful if $\|\hat{\bm{x}} - \bm{x_0}\|/\|\bm{x_0}\| \leq 10^{-4}$. The results show that the probability of accurate reconstruction crosses $50\%$ within one measurement from the point predicted by the theoretical calculation of the SD, and that the difference of measurements required for $50\%$ reconstruction probability is around 10 measurements, as predicted by the difference of the two SDs (data not shown due to space constraints). These simulation results validate the theoretical analysis.

\subsection{The case of non-negative monotone signals}
We also consider the case of increasing and sparsely varying signals $\bm{x} \in\mathbb{R}^N$, that are also non-negative, which we denote without loss of generality as $\bm{x}(0)=0$, and consider the first entry as a change point if $\bm{x}(1)>0$. In this case we consider the following convex regularizer $f(\bm{x}) = \bm{x}(N)$ for monotonically increasing signals and $f(\bm{x}) =\infty$ otherwise.
%\be
%	f(\bm{x}) = \left\{ \begin{array}{ll} \bm{x}(N), & \bm{x}(i) \geq \bm{x}(i-1), i \in [N] \\ \infty, & \text{otherwise,} \end{array} \right.
%	\label{RTV}
%\ee
%\begin{lemma}
%	Let $\Omega = \{ i \in [N] : \bm{x_0}(i) > \bm{x_0}(i-1) \}$, the set of change points, and define $i_1 < i_2 < \ldots < i_k$ the elements of $\Omega$ in increasing order. The descent cone of the norm $f$ of \eqref{RTV} at $\bm{x_0}$ is given by
%	\be 
%		\mathcal{D}(f,\bm{x_0}) = \left\{\bm{y}\in\mathbb{R}^N:  
%						\begin{array}{c}   0 \leq \bm{y}(1) \leq \bm{y}(2) \leq \ldots \leq \bm{y}(i_1-1) \\
%									\bm{y}(i_1) \leq \bm{y}(i_1+1) \leq \ldots \leq \bm{y}(i_2 - 1) \\
%									\vdots \\
%									\bm{y}(i_{k-1}) \leq \bm{y}(i_{k-1} + 1) \leq \ldots \leq \bm{y}(i_k -1) \\
%									  0 \leq -\bm{y}(N) \leq -\bm{y}(N-1) \leq \ldots \leq -\bm{y}(i_k)
%						\end{array}\right\}.
%	\ee
%	\label{desc_mon}
%\end{lemma}
%
%The proof is similar to the proof of Lemma \ref{desc_mon_neg} and is ommitted.
%The proof appears in appendix \ref{proof_desc_mon}. 
%Lemma \ref{desc_mon} states that the descent cone $\mathcal{D}(f,\bm{x_0})$ can be expressed as the product of $k+1$ simpler convex cones of monotonically increasing signals, that are either bounded or not. Therefore its SD equals to the sum of the SDs of these simpler cones, that are again given by Fact \ref{chambers}.
%Using Lemma \ref{desc_mon} and Fact \ref{chambers} we derive the following theorem that explicitly characterizes the SD of the descent cone for any point $\bm{x_0}$.
Using a similar procedure we can derive the descent cone $\mathcal{D}(f,\bm{x_0})$ and from Fact \ref{chambers} get a similar formula for the SD:
\begin{theorem}
Let $\Omega = \{ i \in [N] : \bm{x_0}(i) > \bm{x_0}(i-1) \}$ and define $i_1 < i_2 < \ldots < i_k$ the elements of $\Omega$ in increasing order. %Define $i_0 = 1$ and $i_{k+1} = N+1$. 
Then the SD of the descent cone at $\bm{x_0}$ is given by
\[%be
	\delta(\mathcal{D}(f,\bm{x_0})) = \dfrac{1}{2}H_{i_1-1} + \dfrac{1}{2}H_{N+1-i_k} + \sum_{j=2}^{k}H_{i_j-i_{j-1}}.
\]%ee
%where $H_d$ denotes the $d$-th harmonic number $H_d = \sum_{i=1}^d\frac{1}{d}$ and $H_0 = 0$.
\end{theorem}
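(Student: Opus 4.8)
\noindent\emph{Proof proposal.} The plan is to follow the same route as for Lemma~\ref{desc_mon_neg}, adapted to the regularizer $f(\bm{x})=\bm{x}(N)$ and to the convention $\bm{x}(0)=0$, and then to read off the statistical dimension block by block via Fact~\ref{chambers}. First I would compute the descent cone from Definition~\ref{DC-def}: $\bm{y}\in\mathcal{D}(f,\bm{x_0})$ iff there is $\tau>0$ such that $\bm{x_0}+\tau\bm{y}$ is monotone with $(\bm{x_0}+\tau\bm{y})(0)=0$ and $f(\bm{x_0}+\tau\bm{y})\le f(\bm{x_0})$. The last inequality is $\bm{x_0}(N)+\tau\bm{y}(N)\le\bm{x_0}(N)$, i.e.\ $\bm{y}(N)\le 0$. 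For the monotonicity constraints the argument is identical to Lemma~\ref{desc_mon_neg}: at an index $i\notin\Omega$ one has $\bm{x_0}(i)=\bm{x_0}(i-1)$ (with $\bm{x_0}(0)=0$), which forces $\bm{y}(i)\ge\bm{y}(i-1)$ — and $\bm{y}(1)\ge 0$ when $1\notin\Omega$, since then $\bm{x_0}(1)=0$ — whereas at an index $i\in\Omega$ the constraint is strict and imposes nothing on $\bm{y}$ for small enough $\tau$.

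Consequently $\mathcal{D}(f,\bm{x_0})$ splits over the disjoint coordinate blocks $\{1,\dots,i_1-1\}$, the middle blocks $\{i_j,\dots,i_{j+1}-1\}$ for $j=1,\dots,k-1$, and $\{i_k,\dots,N\}$, with constraints
\[
0\le\bm{y}(1)\le\dots\le\bm{y}(i_1-1),\qquad
\bm{y}(i_j)\le\dots\le\bm{y}(i_{j+1}-1),\qquad
\bm{y}(i_k)\le\dots\le\bm{y}(N)\le 0 .
\]
Since these blocks live on orthogonal coordinate subspaces, $\mathcal{D}(f,\bm{x_0})$ is their orthogonal direct product, so (as in the discussion following Lemma~\ref{desc_mon_neg}) $\delta$ equals the sum of the block statistical dimensions. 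The leading block is a copy of $\mathcal{C}_2^{\,i_1-1}$, contributing $\tfrac12 H_{i_1-1}$ by Fact~\ref{chambers}; each middle block is a copy of $\mathcal{C}_1^{\,i_{j+1}-i_j}$, contributing $H_{i_{j+1}-i_j}$, and summing over $j$ (after reindexing) gives $\sum_{j=2}^{k}H_{i_j-i_{j-1}}$. The trailing block $\{\bm{y}(i_k)\le\dots\le\bm{y}(N)\le 0\}$ has dimension $N+1-i_k$ and is carried onto $\mathcal{C}_2^{\,N+1-i_k}$ by the coordinate-reversal-and-sign-flip map $\bm{y}\mapsto(-\bm{y}(N),-\bm{y}(N-1),\dots,-\bm{y}(i_k))$, which is orthogonal; since $\delta$ is invariant under orthogonal transformations, this block contributes $\tfrac12 H_{N+1-i_k}$. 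Adding the three contributions yields the stated formula.

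The part I expect to require the most care is the bookkeeping at the two ends rather than any deep estimate: translating the convention $\bm{x}(0)=0$ into the half-line constraint $\bm{y}(1)\ge 0$ on the leading block, checking that the degenerate cases $i_1=1$ and $i_k=N$ are handled correctly (e.g.\ $i_1=1$ makes the leading block empty, consistent with $H_0=0$; $i_k=N$ collapses the trailing block to the half-line $\{\bm{y}(N)\le 0\}$, of statistical dimension $\tfrac12=\tfrac12 H_1$), and recognizing the monotone-and-non-positive trailing block as an orthogonal image of a $\mathcal{C}_2$ cone so that Fact~\ref{chambers} applies verbatim. Everything else reduces to the product-of-monotone-cones argument already used for Lemma~\ref{desc_mon_neg} and the ensuing theorem.
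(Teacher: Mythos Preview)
Your proposal is correct and follows essentially the same approach the paper indicates: the paper itself does not spell out a proof here but merely says ``using a similar procedure we can derive the descent cone $\mathcal{D}(f,\bm{x_0})$ and from Fact~\ref{chambers} get a similar formula for the SD,'' and your argument is precisely that similar procedure carried out in detail, including the correct identification of the two end blocks with copies of $\mathcal{C}_2$ (the trailing one via an orthogonal reflection) and careful handling of the degenerate cases $i_1=1$ and $i_k=N$.
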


\section{Optimal denoising}
For the case of monotone, sparsely varying, non-negative signals it is also possible to compute the minimax denoising risk, by using Theorem \ref{mm_risk}.
To consider the risk of the denoising problem \eqref{PDN}, we first derive the subdifferential of $f$. Let $G$ be the $N\times N$ matrix with $[G]_{ij} = 1_{\{i=j\}} - 1_{\{i=j+1\}}$ 
%given by:
%\[
%	G = \left[\begin{array}{ccccc} 1 & 0 & 0 & \ldots & 0 \\ -1 & 1 & 0 & \ldots & 0 \\ 0 & -1 & 1 & \ldots & 0 \\ \vdots &  \ddots & \ddots & \ddots & \vdots \\ 0 & 0 & \ldots & -1 & 1 \end{array}\right],
%\]
and define the function $h:\mathbb{R}^N\mapsto \mathbb{R}\cup\{\infty\}$, with 
\[
	h(\bm{z}) = \left\{ \begin{array}{ll} \bm{1}^{\top}\bm{z}, & \bm{z}(i) \geq 0, i \in [N] \\ \infty, & \text{otherwise.} \end{array} \right.
\]
Then $f(\bm{x}) = h(G\bm{x})$ and $\partial f(\bm{x}) = G^{\top}\partial h(G\bm{x})$ and %. The subdifferential of $f$ can be computed as follows:
\[
	\partial f(\bm{x}) = G^{\top}\bm{w}, \text{ with } \left\{ \begin{array}{cc} \bm{w}(i) = 1, & \bm{x}(i) > \bm{x}(i-1) \\  \bm{w}(i) \leq 1, & \bm{x}(i) = \bm{x}(i-1) \end{array} \right..
\]
Therefore the distance of any vector $\bm{g}\in\mathbb{R}^N$ from $\partial f(\bm{x_0})$ can be computed by solving the following quadratic program
\be
	\tag{QP}
	\begin{aligned}
		 & \underset{\bm{w},\tau}{\text{minimize }}  \|\bm{g}-G^{\top}\bm{w}\|^2, \\
		& \text{subject to:} \; \tau\geq0, \{w(j)=\tau, j\in \Omega\}, \{w(j) \leq \tau, j\in \Omega^c\}.
	\end{aligned}
	\label{quad}
\ee

\begin{lemma}
	Consider the quadratic program \eqref{quad} and let $i_k$ denote the last element of $\Omega$. Then the optimal $\tau^*$ is given by
	\be
		%\tau^{\ast} = \max\{ \bm{g}(N),\bm{g}(N) + \bm{g}(N-1), \ldots, \bm{g}(N) + \bm{g}(N-1) + \ldots  + \bm{g}(i_k), 0 \}. \\
		\tau^{\ast} = \max\left(\max_{j=i_k,\ldots,N}\left\{ \sum_{n=j}^{N}\bm{g}(n)\right\},0\right).
	\ee
	\label{opt_tau}
\end{lemma}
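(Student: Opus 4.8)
\emph{Proof proposal.} The plan is to collapse the quadratic program to an optimization over $\tau$ alone, and then to a short barrier (obstacle) computation. First I would make the objective explicit: since $[G^{\top}\bm{w}](i)=\bm{w}(i)-\bm{w}(i+1)$ with the convention $\bm{w}(N+1)=0$, \eqref{quad} minimizes $\sum_{i=1}^{N}\big(\bm{g}(i)-\bm{w}(i)+\bm{w}(i+1)\big)^2$ subject to $\tau\ge 0$, $\bm{w}(i_j)=\tau$ for the change points $i_1<\dots<i_k$ of $\Omega$, and $\bm{w}(j)\le\tau$ otherwise.

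The key observation is that for fixed $\tau$ the equalities $\bm{w}(i_j)=\tau$ split the objective into independent groups of terms: a ``head'' group on indices $1,\dots,i_1-1$, ``interior'' groups on $i_j,\dots,i_{j+1}-1$ for $j=1,\dots,k-1$, and a ``tail'' group on $i_k,\dots,N$. In the head and interior groups the only $\bm{w}$-values equal to $\tau$ are the pinned ones, so replacing each free $\bm{w}(i)$ in such a group by $\tau+\bm{u}(i)$ with $\bm{u}(i)\le 0$ (and $\bm{u}\equiv 0$ at the pinned endpoints) makes every $\tau$ cancel; minimizing over $\bm{u}$ then contributes a constant independent of $\tau$. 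Hence $\tau^{*}$ is dictated entirely by the tail group, where $\bm{w}(i_k)=\tau$ but $\bm{w}(N+1)=0$ is pinned to $0$, so this cancellation is unavailable. There I would instead substitute $\bm{s}(i)=\tau-\bm{w}(i)$, turning the constraints into $\bm{s}(i_k)=0$, $\bm{s}(i)\ge 0$ for $i=i_k+1,\dots,N$, and $\bm{s}(N+1)=\tau\ge 0$, and the objective into $\sum_{i=i_k}^{N}\big(\bm{g}(i)+\bm{s}(i)-\bm{s}(i+1)\big)^2$. Writing $m=N-i_k+1$, $c_j=\bm{g}(i_k+j-1)$ and $b_j=\bm{s}(i_k+j)$ for $j=0,\dots,m$ (so $b_0=0$ and $b_m=\tau$), the whole program reduces to: minimize $\sum_{j=1}^{m}\big(c_j-(b_j-b_{j-1})\big)^2$ over $b_1,\dots,b_m\ge 0$, and read off $\tau^{*}=b_m^{*}$. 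The map $(b_1,\dots,b_m)\mapsto(b_1-b_0,\dots,b_m-b_{m-1})$ is invertible, so this objective is strictly convex and its minimizer is unique.

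For the last step I would pass to the prefix sums $S_j=c_1+\dots+c_j$ (with $S_0=0$) and set $\rho_j=S_j-b_j$, $\rho_0=0$. The objective becomes $\sum_{j=1}^{m}(\rho_j-\rho_{j-1})^2$, the constraint $b_j\ge 0$ becomes $\rho_j\le S_j$, and $\tau^{*}=b_m^{*}=S_m-\rho_m^{*}$. I claim the unique optimal path satisfies $\rho_m^{*}=\min_{0\le j\le m}S_j$. For ``$\ge$'': replacing $\rho_j^{*}$ by $\max(\rho_j^{*},\min_{l}S_l)$ keeps the path feasible (each coordinate still lies $\le S_j$) and, since $x\mapsto\max(x,c)$ is $1$-Lipschitz, does not increase $\sum(\rho_j-\rho_{j-1})^2$; by uniqueness the truncated path equals $\rho^{*}$, forcing $\rho_m^{*}\ge\min_l S_l$. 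For ``$\le$'': if $j_0$ attains $\min_j S_j$, then continuing the optimal path flatly, $\rho_j=\rho_{j_0}^{*}$ for $j>j_0$, is feasible because $\rho_{j_0}^{*}\le S_{j_0}\le S_j$, and does not increase the energy; uniqueness gives $\rho_m^{*}=\rho_{j_0}^{*}\le S_{j_0}$. Therefore $\tau^{*}=S_m-\min_{0\le j\le m}S_j=\max\big(0,\ \max_{1\le j\le m}(S_m-S_{j-1})\big)=\max\big(0,\ \max_{1\le j\le m}\sum_{l=j}^{m}c_l\big)$, and substituting $c_l=\bm{g}(i_k+l-1)$ and reindexing $j'=i_k+j-1$ yields exactly $\tau^{*}=\max\big(\max_{j=i_k,\dots,N}\{\sum_{n=j}^{N}\bm{g}(n)\},\,0\big)$.

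The main obstacle I expect is bookkeeping rather than difficulty: getting the group decomposition and the boundary conventions right — which objective term $\big(\bm{g}(i)-\bm{w}(i)+\bm{w}(i+1)\big)^2$ belongs to which group, that the pinned value $\bm{w}(N+1)=0$ sits in the tail group, and that the shift genuinely removes $\tau$ from the head and interior groups — so that the reduction to the scalar problem is airtight. Once that is done, the endpoint identity $\rho_m^{*}=\min_j S_j$ is immediate from strict convexity together with the two one-line monotonicity facts for $\sum(\text{increments})^2$ (truncate from below; flatten after the minimum).
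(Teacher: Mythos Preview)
Your argument is correct and takes a genuinely different route from the paper's proof. The paper proceeds via the KKT conditions: it writes the Lagrangian, inverts $GG^{\top}$ explicitly to obtain the closed form $\bm{w}(j)=\sum_{i\ge j}\bm{g}(i)+(N-j+1)\lambda_\tau+\sum_{i>j}(i-j)\bm{\lambda}(i)$, and then runs a backward sweep from $j=N$ down to $j=i_k$, using complementary slackness to kill the dual variables one by one and force $\tau^{\ast}$ to equal $M=\max_{j\ge i_k}\sum_{n\ge j}\bm{g}(n)$ (or $0$ when $M<0$). You instead stay entirely in the primal: the group decomposition and the shift $\bm{w}\mapsto \tau+\bm{u}$ make it transparent \emph{why} only the tail block $i_k,\dots,N$ carries any $\tau$-dependence, something the KKT calculation discovers only a posteriori through the backward sweep. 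Your final reduction to the path energy $\sum(\rho_j-\rho_{j-1})^2$ under the obstacle $\rho_j\le S_j$ and the two monotonicity moves (truncate below at $\min_l S_l$; flatten after the minimizing index) is a clean way to pin down $\rho_m^{\ast}=\min_{0\le j\le m}S_j$, relying only on strict convexity rather than on explicit dual variables. The trade-off: the paper's approach is mechanical and requires no cleverness once one commits to KKT and the explicit inverse of $GG^{\top}$, while yours is more elementary (no duality, no matrix inversion) and arguably more illuminating, at the cost of the bookkeeping you already flagged.
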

\proof %\begin{proof}
	We consider the Lagrangian function 
\be
	\mathcal{L}(\bm{w},\tau,\bm{\lambda},\lambda_{\tau}) = \frac{1}{2}\|\bm{g} - G^{\top}\bm{w}\|^2 - \lambda_{\tau}\tau + \bm{\lambda}^{\top}(\bm{w} - \tau\bm{1}_N^{\top}).
\ee
The dual variable constraints and the first order optimality conditions of \eqref{quad} can be written as 
\begin{eqnarray}
%\be \begin{split}
	GG^{\top}\bm{w} - G\bm{g} + \bm{\lambda} & = & \bm{0}, \label{grad} \\
	\bm{1}^{\top}\bm{\lambda} + \lambda_{\tau}& = & 0,  \label{eq_lambda} \\
	\bm{\lambda}(j)  \geq  0,\quad \bm{w}(j) & \leq &\tau, \quad j \not\in\Omega \label{ineq_con} \\
	\bm{\lambda}(j)(\bm{w}(j) - \tau) & = & 0, \quad j \not\in\Omega  \label{slack_ineq} \\
	\bm{w}(j) & = &\tau, \quad j \in\Omega \label{eq_con} \\
	%\bm{\lambda}(j) & \geq & 0, \quad j \not\in\Omega \\
	\lambda_{\tau} \geq  0, \quad \lambda_{\tau}\tau & = &  0. \label{slack_tau}
%\end{split} \ee
\end{eqnarray}
%From \eqref{grad}
%\be
\begin{flalign}
	\text{From \eqref{grad}}\quad && \bm{w} = \underbrace{(GG^{\top})^{-1}G}_{E}\bm{g} -  \underbrace{(GG^{\top})^{-1}}_{F}\bm{\lambda}, &&
	\label{opt-w}
\end{flalign}
%\ee
where the matrices $E,F$ can be computed explicitly: $[E]_{ij} = 1_{\{j\geq i\}}$ and $[F]_{ij} = N - \max\{i,j\}+1$, and using \eqref{eq_lambda} gives %leading to the equations (after the use of \eqref{eq_lambda})
\be \begin{split}
	%\bm{w}(N-j) = \sum_{i=N-j}^N\bm{g}(i) + (j+1)\lambda_{\tau} + \sum_{i=N-j+1}^N(i-N+j)\bm{\lambda}(i),
	\hskip-0.3cm \bm{w}(j) = \sum_{i=j}^N\bm{g}(i) + (N-j+1)\lambda_{\tau} + \sum_{i=j+1}^N(i-j)\bm{\lambda}(i),
	\label{opt-w-spr}
\end{split} \ee 
%for $j=0,1,\ldots,N-1$. Now suppose let $i_k$ the last change point and suppose that 
for $j\in [N]$. Now suppose let $i_k$ the last change point and suppose that 
$M = \max_{j=i_k,\ldots,N}\left\{ \sum_{n=j}^{N}\bm{g}(n)\right\}.$
Consider first the case where $M<0$ and suppose that $\tau > 0$. In this case from \eqref{slack_tau} we have $\lambda_{\tau} = 0$. Plugging this into \eqref{opt-w-spr} for $j=N$ we get $\bm{w}(N) = \bm{g}(N) < 0 \Rightarrow \bm{w}(N) - \tau < 0 \stackrel{\eqref{slack_ineq}}{\Rightarrow} \bm{\lambda}(N) = 0$. Decreasing $j$ and proceeding similarly we get $\bm{\lambda}(N) = \bm{\lambda}(N-1) = \ldots = \bm{\lambda}(i_k+1) = 0$. Now for $j=i_k$ we get
$
	\bm{w}(i_k) = \sum_{j=i_k}^N\bm{g}(j) < 0,
$
and \eqref{eq_con} cannot be satisfied for $\tau > 0$. Therefore $\tau = 0$. 

Now assume that $M>0$, and that this maximum occurs at the location $N-l$. %First we show that $\tau>0$. To see this assume $\tau = 0$. Then for $j=l$ the right side of \eqref{opt-w-spr} is strictly positive, meaning $\bm{w}(N-l)>0$ which contradicts \eqref{ineq_con}. Therefore $\tau > 0 \Rightarrow \lambda_{\tau} = 0$. 
Then by plugging $j=N-l$ into \eqref{opt-w-spr} and the nonnegativity of the dual variables we have that $\bm{w}(N-l) \geq M \Rightarrow \tau \geq M \Rightarrow \lambda_{\tau} = 0$. We proceed as before: For $j=N$ \eqref{opt-w-spr} gives $\bm{w}(N) < M \leq \tau \Rightarrow \bm{\lambda}(N) = 0$. And similarly $\bm{\lambda}(N) = \bm{\lambda}(N-1) = \ldots = \bm{\lambda}(N-l+1) = 0$. Plugging this into \eqref{opt-w-spr} for $j=N-l$ we get $\bm{w}(N-l) = M$. Since $\bm{w}(N-l) \leq \tau$ we get that $\tau = M$. \qed
%\end{proof}

\noindent Lemma \ref{opt_tau} allows us to estimate the regularizer that minimizes \eqref{minmax} by estimating $\tau^{avg}$ that arises in \eqref{sd_sub}, and consequently set the regularizer $\lambda = \tau^{avg}\sigma$. In general $\tau^{avg} = M(N-i_k+1)$ where $M(n)$ is the expected value of the maximum of a standard Gaussian random walk of $n$ steps, truncated at 0. $M(1) = 1/\sqrt{2\pi}$, and $M(2) = (1+\sqrt{2})/(2\sqrt{\pi})$. In general $M(n)$ cannot be computed explicitly, 
but can be easily upper bounded:
%but can simulated trivially. Finally for $i_k\ll N$, this random walk can be approximated by a Brownian motion %, and we get $M(N-i_k+1) \approx \sqrt{2(N-i_k+1)}/\sqrt{\pi}$. This Brownian motion limit is in fact 
%which provides
%an upper bound: 
Let $X_i\sim \mathcal{N}(0,1)$, $S_k = \sum_{i=1}^kX_i$, and $E_n = \max_{1\leq k \leq n}S_k$.  Using the L\'{e}vy inequality
\[ \begin{split}
	\mathbb{P}\left(E_n \geq x\right) &\leq 2\mathbb{P}(S_n \geq x) \Rightarrow  \\
%\end{split} \]
%\[ \begin{split}
	M(n)  = \int_{0}^{\infty}\mathbb{P}\left(E_n \geq x\right) \mathrm{d} x 
		 & \leq \int_{0}^{\infty}\hskip-0.2cm\mathrm{erfc}\left(\frac{x}{\sqrt{2n}}\right) \mathrm{d}x = \sqrt{\frac{2n}{\pi}}.
\end{split} \]

%\subsection{The case of arbitrary $\bm{x}(1)$}
%The previous analysis assumes a nonnegative $\bm{x}$. Here we relax this assumption and present a characterization for the SD. In this case, the restriction of the total variation norm becomes
%\be
%	f(\bm{x}) = \left\{ \begin{array}{ll} \bm{x}(N) - \bm{x}(1), & \bm{x}(i+1) \geq \bm{x}(i), i \in [N-1] \\ \infty, & \text{otherwise.} \end{array} \right.
%	\label{NRTV}
%\ee
%Proceeding as previously we can derive a characterization of the descent cone $\mathcal{D}(f,\bm{x_0})$:

%\subsection{Empirical results}
%A monotone, sparsely varying signal, can be thought of as the integral of a sparse non-negative signal. The striking resemblance of the PTCs for the reconstruction of the two signals in these cases, motivates the consideration of other types of signals, that resemble leaky integration. More specifically, for a sparse non-negative signal $\bm{s}\in\mathbb{R}^N$ we can consider the signal $\bm{x} \in\mathbb{R}^N$ given by 
%\[
%	\bm{x}(n) = \gamma\bm{x}(n-1) + \bm{s}(n),
%\]
%where $\bm{x}(0)$ takes some arbitrary value, and $0 \leq \gamma \leq 1$. We have considered the case $\gamma = 1$, whereas $\gamma = 0$ corresponds to the case of sparse, non-negative signals. For $0<\gamma<1$, while closed form expressions cannot be derived, our simulations reveal that the PTC remains very close to the PTCs derived for the case $\gamma=0$ or $\gamma=1$ (data not shown).

\section{Discussion}

The \eqref{PDN} problem for monotone signals was first discussed in \cite{DJM13} in the context of monotone regression without regularization. There an upper bound was derived and the relation of the minimax error with the PTC for the \eqref{PCS} problem was established. For the \ref{PCS} problem \cite{PP13a} examined, in the context of sparse deconvolution, the case of signals where $\bm{x}(i+1) - \gamma\bm{x}(i)$ is sparse and non-negative, with $0<\gamma<1$ and close to 1, and identified the best, average, and worst cases depending on the location of the change points, without deriving a closed form expression. To the best of our knowledge, this paper presents for the first time an non-asymptotic closed form expression that captures the dependence on both the number and the location of the change points, and also characterizes the optimal regularizer.
\begin{figure}[t]
	\centering
	\includegraphics[trim={10mm 4mm 5mm 7mm}, clip,width=0.75\textwidth]{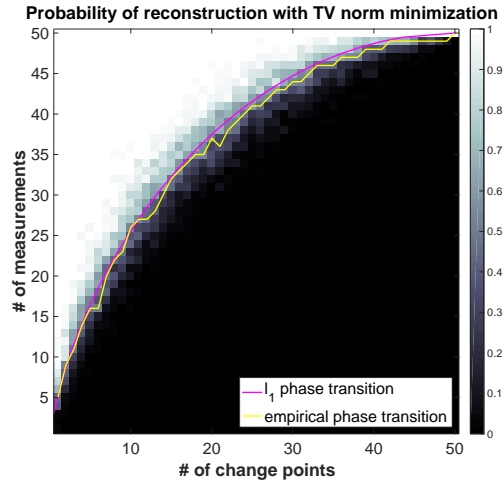}
	\caption{Empirical calculation of reconstruction probability for sparsely varying signals. 50-dimensional piecewise constant signals were constructed with variable number of change points $k$ and locations chosen uniformly at random. For each signal a random Gaussian sensing matrix was constructed with variable number of rows (measurements) $m$. Reconstruction was attempted by minimizing the TV norm subject to the measurements, and for each pair $(k,m)$, 50 iterations were performed. The probability of success (color coded in the background) undergoes a phase transition. The empirical $50\%$ success line (yellow) lies very close to the PTC for sparse signals (magenta) as is theoretically computed in \cite{DMM09}.
	}
	\label{PTC-TV}
\end{figure}
\noindent Future work includes the case of non-monotone sparsely varying signals, with the TV norm acting as the structure inducing function. The striking resemblance between the average SD \eqref{ptc-avg} and the PTC for the case of non-negative sparse signals \citep{DMM09}, motivates a comparison between the average SD for this case and the PTC for recovering sparse signals using the $l_1$ norm. While a closed form solution for the SD is not available, some upper bounds appear in \cite{CX15}, simulations suggest a close match (Fig.~\ref{PTC-TV}).

\section{Acknowledgements}
Part of the work was performed while the author was with the Department of Statistics, Columbia University, NewYork, NY 10027. The author thanks L. Paninski, M. McCoy and J. Tropp for useful discussions.

\vfill\pagebreak

%\section{REFERENCES}
%\label{sec:refs}
% References should be produced using the bibtex program from suitable
% BiBTeX files (here: strings, refs, manuals). The IEEEbib.bst bibliography
% style file from IEEE produces unsorted bibliography list.
% -------------------------------------------------------------------------
\bibliographystyle{chicago}
\bibliography{../mybib}
%\bibliography{strings,refs}

\end{document}